\newcommand*{\mailto}[1]{\href{mailto:#1}{\nolinkurl{#1}}}
\newcommand{\beq}{\begin{equation}}
\newcommand{\eeq}{\end{equation}}
\newcommand{\ba}{\begin{align}}
\newcommand{\ea}{\end{align}}
\numberwithin{equation}{section}
\newtheorem{theorem}{Theorem}[section]
\theoremstyle{definition}
\newtheorem{remark}{Remark}[section]
\begin{document}


\title[Discontinuous Hochstadt-Lieberman Problems]
{Reconstruction and Solvability for Discontinuous Hochstadt-Lieberman Problems}
\date{\today}

\author[C.~F.~Yang]{Chuan-Fu Yang}

\address{Department of Applied Mathematics, School of Science, Nanjing University of
Science and Technology, Nanjing 210094, Jiangsu, People's Republic
of China}

\email{chuanfuyang@njust.edu.cn}

\author[N.~Bondarenko]{Natalia Bondarenko}

\address{1. Department of Applied Mathematics and Physics, Samara National Research University, Samara, Russia
\ \ 2. Saratov State University, Saratov, Russia}

\email{bondarenkonp@info.sgu.ru}

\keywords{Sturm-Liouville problem with a discontinuity; Inverse eigenvalue problem; Hochstadt-Lieberman theorem; Interpolation of entire functions}

\subjclass{34A55, 34B24, 47E05}

\maketitle

\begin{abstract}
  We consider Sturm-Liouville problems with a discontinuity in an interior point, which are motivated by the inverse problems for the torsional modes of the Earth.
  We assume that the potential on the right half-interval and the coefficient in the right boundary condition are given.
  Half-inverse problems are studied, that consist in recovering the potential on the left half-interval and the left boundary condition from the eigenvalues.
  If the discontinuity belongs to the left half-interval, the position and the parameters of the discontinuity also can be reconstructed.
  In this paper, we provide reconstructing algorithms and prove existence of solutions for the considered inverse problems.
  Our approach is based on interpolation of entire functions.
\end{abstract}
\maketitle

\section{Introduction}

The paper concerns the theory of inverse spectral problems for differential operators. Such problems consist in recovering operators
from their spectral characteristics.

This paper is focused on the eigenvalue problem
\begin{equation}\label{SL}
  -y''+q(x)y=\lambda^2 y, \ 0<x<1,
\end{equation}
with the boundary conditions
\begin{equation}\label{bcs}
  y'(0)-h_1y(0)=0=y'(1)+h_2y(1)
\end{equation}
and with the jump conditions
\begin{equation}\label{jcs}
  y(d+)=a_1y(d-), \ y'(d+)=a^{-1}_{1}y'(d-)+a_2y(d-),
\end{equation}
where $q\in L^2(0,1)$, $0<d\leq 1/2, a_1>0$ and $|a_1-1|+|a_2|>0$.

This problem appears in the inverse problems for the torsional modes of the Earth. Here the discontinuity is mainly caused by reflection of the shear waves at the base of the
crust.

The most complete results in inverse problem theory are obtained for the Sturm-Liouville equation~\eqref{SL} without discontinuities
(see the monographs \cite{FY, LE, MA, PT}). In particular, Borg \cite{BO, FY} has proved, that the Sturm-Liouville potential $q(x)$ is uniquely determined
by two spectra, corresponding to different boundary conditions. However, in some special cases, only one spectrum is sufficient.
The classical Hochstadt-Lieberman theorem \cite{HL} states that, in the case when $a_1=1$ and $a_2=0,$ if
the parameters $h_1$ and $h_2$ of the boundary conditions for the problem (\ref{SL})-(\ref{jcs}) are fixed and the potential $q$ is given on the right half-interval,
then the potential is uniquely determined by the eigenvalues $\{\lambda_n^2\}_{n\geq 0}$.
In fact, the restriction of left boundary parameter $h_1$ is unnecessary (see Hald \cite{HA1}).

Later Hald generalized the theorem by Hochstadt and Lieberman to the Sturm-Liouville operator with the discontinuity \cite{HA}.
Hald supposed, that the discontinuity is in the left half-interval,
the potential is known over the right-half interval, and the right boundary condition is given.
Then the potential and the left boundary condition are uniquely determined by the eigenvalues.
The position of the discontinuity and the jump in the eigenfunctions are also uniquely determined.
In particular, those Hald's results are valid, when the discontinuity is in the middle of the interval. However, a counter-example shows that,
by using only the eigenvalues, one cannot in general determine the jump in the eigenfunctions (see p.560 in \cite{HA}).

Let us formulate the Hald's uniqueness theorems, generalizing the theorem by Hochstadt and Lieberman:

\medskip

\textbf{Theorem A  (Uniqueness).} \emph{Let $d=1/2$ and $\{\lambda^2_n\}_{n\geq 0}$ be the eigenvalues of the problem
(\ref{SL})-(\ref{jcs}). Then the data $\left\{\{\lambda^2_n\}_{n\geq 0},a_1, h_2,q|_{[1/2,1]}\right\}$
uniquely determine $q$ almost everywhere on the interval $[0,1]$, the quantities $a_2$ and $h_1$.}\\

\textbf{Theorem B (Uniqueness).} \emph{Let $0<d<1/2$ and $\{\lambda^2_n\}_{n\geq 0}$ be the eigenvalues of the problem
(\ref{SL})-(\ref{jcs}). Then the data $\left\{\{\lambda^2_n\}_{n\geq 0},h_2,q|_{[1/2,1]}\right\}$
uniquely determine $q$ almost everywhere on the interval $[0,1]$ and the quantities $\{a_1,a_2,d,h_1\}$.}\\

Note that for the inverse problems of spectral analysis, the following most important issues are usually studied:

\begin{enumerate}
\item uniqueness,

\item constructive methods for solution,

\item necessary and sufficient conditions of solvability.
\end{enumerate}

Uniqueness theorems for the classical Hochstadt-Lieberman problems without discontinuities
were obtained, e.g., in \cite{HL, GS}. We mention that these problems are also called the half-inverse problems and the inverse problems by mixed data.
Later on, constructive methods for solving the Hochstadt-Lieberman problems have been developed
and conditions for existence of solutions have been obtained by Sakhnovich \cite{SA}, Martinyuk and Pivovarchik \cite{MP, PI},
Hryniv and Mykytyuk \cite{HM}. Reconstruction procedures for solving half-inverse problems for
Sturm-Liouville operators and pencils were also developed by Buterin \cite{B1,B2}. However, for the inverse problems for
the Sturm-Liouville operator with the discontinuity (\ref{SL})-(\ref{jcs}), only uniqueness theorems have been proved before
(see \cite{HA1, HA, SBI}).
The goals of this paper are to develop methods for constructing solutions of these problems and to present
conditions for existence of solutions.

In this paper, we focus on the inverse problems, corresponding to Theorems~A and~B. In order to solve these inverse problems constructively,
we develop the ideas of \cite{PI}. Our technique is based on the representation of the characteristic function for the whole interval,
by using the characteristic functions for the half-intervals. The main tool of our method is interpolation of entire functions (see \cite{PI, B1, B2}).
However, we use slightly different approaches for $d = 1/2$ and $d < 1/2$. In the case $d = 1/2$,
the parameters of the jump conditions~\eqref{jcs} are supposed to be known,
and our half-inverse problem is reduced to the complete inverse problem on the interval $(0, 1/2)$.
The resulting problem does not have a discontinuity, so we can solve it by standard methods of inverse problem theory.
In particular, the transformation operator method by Marchenko \cite{MA} is applied. In the case $0 < d < 1/2$, we first reconstruct the parameters
$d$ and $a_1$ of the jump condition. Then we pass to the inverse problem for the Sturm-Liouville equation with a discontinuity on the left half-interval.
For solving this problem, the method of spectral mappings \cite{FY} is applied.
For the both half-inverse problems, corresponding to Theorems~A and~B,
we obtain conditions, necessary and sufficient for existence of solution. They are formulated in Theorems~2.1 and~3.1.

\section{The case $d=1/2$}
This section deals with the existence result, corresponding to Theorem A, and the reconstructing algorithm for solving the corresponding inverse problem.

\subsection{Direct problems}

In this subsection, some preliminaries are provided. In particular, we represent the characteristic function of the problem (\ref{SL})-(\ref{jcs}),
using the characteristic functions for the half-intervals (see the relation~\eqref{2.7} below). We also describe the properties of the eigenvalues, and construct the characteristic function
by its zeros as an infinite product.	

Let $d=1/2$. Consider (\ref{SL})-(\ref{jcs}) as a boundary value problem on a star graph, and rewrite it as follows:
\begin{align}
   &-y''_j+q_j(x)y_j=\lambda^2y_j, \quad x\in\left(0,1/2\right),j=1,2,  \label{2.1}\\
   &y'_j(0)-h_jy_j(0)=0, \label{2.2} \\
   &y_2\left(1/2\right)=a_1y_1\left(1/2\right),  \label{2.3}\\
   &y'_2\left(1/2\right)+a^{-1}_1y'_1\left(1/2\right)+a_2y_1\left(1/2\right)=0.\label{2.4}
\end{align}
Here $q_j\in L^2(0,1/2)$ for $j=1,2$, and $q_1(x):=q(x)|_{[0,1/2]}$, $q_2(x):=q(1-x)|_{[0,1/2]}$.

Denote by $\varphi_j(x,\lambda)$ the solution of Eq.(\ref{2.1}) with $\varphi_j(0,\lambda)-1=\varphi'_j(0,\lambda)-h_j=0$. Then, according to \cite{FY,MA},
the following relations hold
\begin{equation}\label{2.5}
  \varphi_j\left(1/2,\lambda\right)=\cos\frac{\lambda}{2}+\left(h_j+[q_j]\right)\frac{\sin\frac{\lambda}{2}}{\lambda}+\frac{\psi^{(j)}_1(\lambda)}{\lambda},
\end{equation}
\begin{equation}\label{2.6}
  \varphi'_j\left(1/2,\lambda\right)=-\lambda\sin\frac{\lambda}{2}+\left(h_j+[q_j]\right)\cos\frac{\lambda}{2}+\psi^{(j)}_2(\lambda),
\end{equation}
where $\psi^{(j)}_1,\psi^{(j)}_2\in \mathcal{L}^{1/2}$ ($\mathcal{L}^{a}$ is the class of entire functions of exponential type $\leq a$, belonging to $L^2(\mathbb{R})$ for real
$\lambda$), and for an integrable function $f$ in $[0,1/2]$, we define
$$
[f]:=\frac{1}{2}\int_0^{1/2}f(x)dx.
$$

The solution of the problem (\ref{2.1})-(\ref{2.2}) possesses the form $y_j=c_j\varphi_j(x,\lambda)$
$(j=1,2)$, where $c_j$ are constants. Taking (\ref{2.3}) and (\ref{2.4}) into account, we represent
the characteristic function of the problem (\ref{2.1})-(\ref{2.4}) in the form
\begin{equation}\label{2.7}
\begin{split}
\Phi(\lambda)=&a_1\varphi_1\left(1/2,\lambda\right)\varphi'_2\left(1/2,\lambda\right)
+a^{-1}_1\varphi'_1\left(1/2,\lambda\right)\varphi_2\left(1/2,\lambda\right)\\
&+
a_2\varphi_1\left(1/2,\lambda\right)\varphi_2\left(1/2,\lambda\right).
\end{split}
\end{equation}
Using (\ref{2.5}) and (\ref{2.6}), we see that \eqref{2.7} is equivalent to
\begin{equation}\label{2.8}
   \begin{split}
   \Phi(\lambda)\!=&-\frac{a_1+a_1^{-1}}{2}\lambda\sin\lambda+1/2(a_2+(a_1-a^{-1}_1)\left([q_2]+h_2\right.\\
        &\left.\!\!-[q_1]-h_1\right))\!\!+\!\!(a_2+(a_1+a^{-1}_1)\left([q_1+q_2]\right.\\
        &+h_1+h_2))\frac{\cos\lambda}{2}+\psi_3(\lambda),
   \end{split}
\end{equation}
where $\psi_3\in \mathcal{L}^1$.

The eigenvalues $\{ \lambda_n^2 \}_{n \ge 0}$ of the boundary value problem (\ref{SL})-(\ref{jcs}) are real and simple.
We suppose that they are numbered in such a way
that the following asymptotic formula holds (see \cite{YA}):
\begin{equation} \label{asymptla}
  \lambda_n=n\pi+\frac{(-1)^na+b}{n\pi}+\frac{\beta_n}{n},
\end{equation}
where
\begin{equation} \label{defa}
  a=\frac{a_2}{a_1+a_1^{-1}}+\frac{a_1-a_1^{-1}}{a_1+a_1^{-1}}\left([q_2]+h_2-
  [q_1]-h_1\right),
\end{equation}
\begin{equation} \label{defb}
  b=\frac{a_2}{a_1+a_1^{-1}}+[q_1+q_2]+h_1+h_2.
\end{equation}
Here and below the notation $\{\beta_n\}^{\infty}_{n=0}$
is used for different sequences belonging to $l^2$.

By using the numbers $\{\lambda_n\}_{n\ge 0}$, we construct
\begin{equation}\label{s1}
  \Phi(\lambda)=C\prod_{n=0}^{\infty}\left(1-\frac{\lambda^2}{\lambda_n^2}\right),
\end{equation}
where $C > 0$ is a constant.
One may suppose that $\lambda_n\neq 0$, otherwise these numbers may be shifted by
a constant.
In view of the representation (\ref{2.8}), we obtain the formula for the constant $C$:
\begin{equation*}
  C=-\frac{a_1+a^{-1}_1}{2}\lim_{m\rightarrow\infty}\left[\frac{1}{\frac{\pi}{2}+2m\pi}\prod_{n=0}^{\infty}\left(1-\frac{(\frac{\pi}{2}+2m\pi)^2}{\lambda_n^2}\right)\right]^{-1}.
\end{equation*}

We also note that, by using $\{ \lambda_n^2 \}_{n \ge 0}$, $q_2$ and $h_2$, we can construct the values $a_2$ and $[q_1] + h_1$.
Indeed, introduce the numbers
\begin{equation} \label{defga}
\gamma_n := (\lambda_n - n \pi) n \pi, \quad n \ge 0.
\end{equation}
In view of the asymptotic relation~\eqref{asymptla}, the numbers $a$ and $b$ can be calculated by the following formulas:
\begin{equation} \label{findab}
a = \frac{1}{2} \lim_{n \to \infty} (\gamma_{2n} - \gamma_{2n+1}),
\quad b = \frac{1}{2} \lim_{n \to \infty} (\gamma_{2n} + \gamma_{2n + 1}).
\end{equation}
Solving the system of linear equations~\eqref{defa} and~\eqref{defb}, one can find
\begin{gather} \label{om1}
    [q_1] + h_1 = -\frac{1}{2 a_1} ((a_1 + a_1^{-1}) (a - b) + 2 a_1^{-1} (h_2 + [q_2])), \\ \label{defa2}
    a_2 = (b - [q_1] - h_1 - [q_2] - h_2)(a_1 + a_1^{-1}).
\end{gather}

\subsection{Reconstruction of $\varphi_1(1/2,\lambda)$ and $\varphi'_1(1/2,\lambda)$}

Suppose that the eigenvalues $\{\lambda_n^2\}_{n\ge 0}$, the potential $q_2$, the coefficient $h_2$ of the boundary condition and the coefficient $a_1$
of the jump conditions are given. By the formulas~\eqref{om1} and~\eqref{defa2} we can calculate $[q_1] + h_1$ and $a_2$.
Our goal is to find $q_1$ and $h_1$. In this section, we focus on the auxiliary step of recovering
the characteristic functions $\varphi_1(1/2,\lambda)$ and $\varphi'_1(1/2,\lambda)$, associated with the left half-interval.
Our technique is based on interpolation of entire functions (see the relations~\eqref{2.17} and~\eqref{2.24} below).

Note that the potential $q_2$ and the coefficient $h_2$ are given, so we can find the function $\varphi_2(1/2,\lambda)$ and its zeros $\{\nu_n^{(1)}\}_{n\in \mathbb{Z}^0}$
as well as the function $\varphi'_2(1/2,\lambda)$ and its zeros $\{\mu_n^{(1)}\}_{n\in \mathbb{Z}^0}$,
$\mathbb Z^0 := \{ \pm 0, \pm 1, \pm 2, \ldots \}$. The following asymptotic relations are valid for $n \ge 0$ (see \cite{YA}):
\begin{equation}\label{2.9}
  \nu_n^{(1)}=(2n+1)\pi+\frac{2[q_2]+2h_2}{(2n+1)\pi}+\frac{\beta_n}{n},
\end{equation}
\begin{equation}\label{2.10}
  \mu_n^{(1)}=2n\pi+\frac{[q_2]+h_2}{n\pi}+\frac{\beta_n}{n},
\end{equation}
and $\nu_{-n}^{(1)} = -\nu_n^{(1)}$, $\mu_{-n}^{(1)} = -\mu_n^{(1)}$.

Letting $\lambda=\nu_n^{(1)}$ in Eq.(\ref{2.7}), we have
\begin{equation}\label{2.11}
  \varphi_1\left(1/2,\nu_n^{(1)}\right)=\frac{\Phi(\nu_n^{(1)})}{a_1\varphi'_2(1/2,\nu_n^{(1)})}.
\end{equation}

We know from (\ref{2.5}), that for determination of $\varphi_1(1/2,\lambda)$, it is sufficient to recover $\psi_1^{(1)}(\lambda)$. By choosing $\{\nu_n^{(1)}\}_{n\in \mathbb{Z}^0}$
as the nodes of interpolation, we find the function $\psi_1^{(1)}(\lambda)$. In order to do this, we first calculate the values of the
function $\psi_1^{(1)}(\lambda)$ at the nodes.
The relations (\ref{2.5}), (\ref{2.6}), (\ref{2.9}) and (\ref{2.11}) yield
\begin{equation}\label{2.12}
    \begin{split}
      \psi_1^{(1)}(\nu_n^{(1)})=\nu_n^{(1)}\bigg[\frac{\Phi(\nu_n^{(1)})}{a_1\varphi'_2(1/2,\nu_n^{(1)})}-\cos\frac{\nu_n^{(1)}}{2} -\left(h_1+[q_1]\right)\frac{\sin\frac{\nu_n^{(1)}}{2}}{\nu_n^{(1)}}\bigg].
    \end{split}
\end{equation}

Consider the indices $n \ge 0$.
In order to estimate the values $\psi_1^{(1)}(\nu_n^{(1)})$, we need the following relations:
\begin{equation*}
  \begin{split}
     \Phi(\nu_n^{(1)})=&\frac{a_1+a_1^{-1}}{2}\left(2[q_2]+2h_2\right)-a_1[q_1] \\
                       &-a_1^{-1}[q_2]-a_1h_1-a_1^{-1}h_2+\beta_n,
   \end{split}
\end{equation*}
\begin{equation*}
  a_1\varphi'_2(1/2,\nu_n^{(1)})=(-1)^{n-1}a_1(2n+1)\pi(1+\beta_n).
\end{equation*}
Hence
\begin{equation}\label{2.13}
  \frac{\Phi(\nu_n^{(1)})}{a_1\varphi'_2(1/2,\nu_n^{(1)})}=\frac{[q_2]+h_2-[q_1]-h_1}{(-1)^{n-1}(2n+1)\pi}(1+\beta_n).
\end{equation}
Note that
\begin{equation}\label{2.14}
  \cos\frac{\nu_n^{(1)}}{2}=(-1)^{n-1}\frac{[q_2]+h_2}{(2n+1)\pi}+\frac{\beta_n}{n}
\end{equation}
and
\begin{equation}\label{2.15}
  \frac{\sin\frac{\nu_n^{(1)}}{2}}{\nu_n^{(1)}}=\frac{(-1)^{n}}{(2n+1)\pi}(1+\beta_n).
\end{equation}
Substituting (\ref{2.13})-(\ref{2.15}) into (\ref{2.12}) yields
\begin{equation*}
  \begin{split}
    \psi_1^{(1)}(\nu_n^{(1)})=&\nu_n^{(1)}\bigg[\frac{[q_2]+h_2-[q_1]-h_1}{(-1)^{n-1}(2n+1)\pi}  \\
                              &-\frac{h_2+[q_2]}{(-1)^{n-1}(2n+1)\pi}-\frac{h_1+[q_1]}{(-1)^{n}(2n+1)\pi}+\frac{\beta_n}{n}\bigg]\\
                              =&\beta_n, \quad n \ge 0.
  \end{split}
\end{equation*}

Since $\psi_1^{(1)}(\nu_n^{(1)}) = -\psi_1^{(1)}(\nu_{-n}^{(1)})$, we get
\begin{equation}\label{2.16}
  \{\psi_1^{(1)}(\nu_n^{(1)})\}_{n\in \mathbb{Z}^0}\in l^2.
\end{equation}

Since the function $\varphi_2(1/2,\lambda)$ is of sine-type (see \cite{LL}), together with (\ref{2.16}), we can apply Theorem A in \cite{LL}
and obtain
\begin{equation}\label{2.17}
  \psi_1^{(1)}(\lambda)=\varphi_2\left(1/2,\lambda\right)\sum_{n\in \mathbb{Z}^0}\frac{\psi_1^{(1)}(\nu_n^{(1)})}{\frac{d\varphi_2(1/2,\lambda)}{d\lambda}|_{\lambda=\nu_n^{(1)}}(\lambda-\nu_n^{(1)})}.
\end{equation}
The series on the right-hand side of (\ref{2.17}) converges uniformly on any compact subdomain of $\mathbb{C}$ and in $L^2(\mathbb{R})$ for real $\lambda$
to a function, which belongs to
$\mathcal{L}^{1/2}$. Substituting (\ref{2.17}) into (\ref{2.5}), one can construct $\varphi_1(1/2,\lambda)$.

Next we begin to find $\varphi'_1(1/2,\lambda)$. Replacing $\lambda$ by $\mu_n^{(1)}$ in Eq.(\ref{2.7}), we get
\begin{equation}\label{2.18}
  \varphi'_1\left(1/2,\mu_n^{(1)}\right)=\frac{\Phi(\mu_n^{(1)})-a_2\varphi_1(1/2,\mu_n^{(1)})\varphi_2(1/2,\mu_n^{(1)})}{a_1^{-1}\varphi_2(1/2,\mu_n^{(1)})}.
\end{equation}
Consider the indices $n \ge 0$. It follows from (\ref{2.5}), (\ref{2.6}), (\ref{2.10}) and (\ref{2.18}), that
\begin{equation}\label{2.19}
  \begin{split}
    \psi_2^{(1)}(\mu_n^{(1)})=&\varphi'_1\left(1/2,\mu_n^{(1)}\right)+\mu_n^{(1)}\sin\frac{\mu_n^{(1)}}{2}-\left(h_1+[q_1]\right)\cos\frac{\mu_n^{(1)}}{2} \\
                             =&\frac{\Phi(\mu_n^{(1)})}{a_1^{-1}\varphi_2(1/2,\mu_n^{(1)})}-a_1a_2
                             \varphi_1\left(1/2,\mu_n^{(1)}\right)+\mu_n^{(1)}\sin\frac{\mu_n^{(1)}}{2}\\
                             &-\left(h_1+[q_1]\right)\cos\frac{\mu_n^{(1)}}{2}.
  \end{split}
\end{equation}
In order to estimate $\psi_2^{(1)}(\mu_n^{(1)})$, we need the following relations:
\begin{equation*}
  \begin{split}
     \Phi(\mu_n^{(1)})=&-\frac{a_1+a_1^{-1}}{2}\left(2[q_2]+2h_2\right)+a_1[q_2] \\
                       &+a_1^{-1}[q_1]+a_1h_2+a_1^{-1}h_1+a_2+\beta_n,
   \end{split}
\end{equation*}
\begin{equation*}
  a_1^{-1}\varphi_2\left(1/2,\mu_n^{(1)}\right)=(-1)^{n}a_1^{-1}+\beta_n.
\end{equation*}
The latter formulas yield
\begin{equation}\label{2.20}
  \begin{split}
    \frac{\Phi(\mu_n^{(1)})}{a_1^{-1}\varphi_2(1/2,\mu_n^{(1)})}=&(-1)^{n-1}\frac{a_1+a_1^{-1}}{2a_1^{-1}}\left(2[q_2]+2h_2\right) \\
                                                                      &+(-1)^n a_1^2[q_2]+(-1)^n[q_1]\\
                                                                      &+(-1)^n(a_1^2h_2+h_1+a_1a_2)+\beta_n.
  \end{split}
\end{equation}

We also have
\begin{equation}\label{2.21}
  a_1a_2\varphi_1\left(1/2,\mu_n^{(1)}\right)=(-1)^na_1a_2+\beta_n,
\end{equation}
\begin{equation}\label{2.22}
  \mu_n^{(1)}\sin\frac{\mu_n^{(1)}}{2}=(-1)^n\left([q_2]+h_2\right)+\beta_n,
\end{equation}
\begin{equation}\label{2.23}
  \left(h_1+[q_1]\right)\cos\frac{\mu_n^{(1)}}{2}=(-1)^n\left(h_1+[q_1]\right)+\beta_n.
\end{equation}
Substituting (\ref{2.20})-(\ref{2.23}) into (\ref{2.19}), and taking the equality
$\psi_2^{(1)}(\mu_n^{(1)}) = \psi_2^{(1)}(\mu_{-n}^{(1)})$ into account,  we get
\begin{equation*}
\{\psi_2^{(1)}(\mu_n^{(1)})\}_{n \in \mathbb Z^0} \in l^2.
\end{equation*}
Taking into account, that the function $g_1(\lambda):=\frac{\lambda\varphi'_2(1/2,\lambda)}{\lambda^2-(\mu_0^{(1)})^2}$ is of sine type, we again use interpolation (see \cite{LL}):
\begin{equation}\label{2.24}
  \psi_2^{(1)}(\lambda)=g_1(\lambda)\sum_{0\neq n=-\infty}^{+\infty}\frac{\psi_2^{(1)}(\mu_n^{(1)})}{\frac{dg_1(\lambda)}{d\lambda}|_{\lambda=\mu_n^{(1)}}(\lambda-\mu_n^{(1)})}+g_1(\lambda)\frac{\psi_2^{(1)}(0)}{g_1'(0)\lambda}.
\end{equation}
Substituting (\ref{2.24}) into (\ref{2.6}), we can construct $\varphi'_1(1/2,\lambda)$.

Here
\begin{equation}\label{ew}
  \begin{split}
    &\psi_2^{(1)}(0) \\
    &=\varphi_1'(1/2,0)-(h_1+[q_1]) \\
   &=\frac{\Phi(0)-a_1\varphi_1(1/2,0)\varphi_2'(1/2,0)-a_2\varphi_1(1/2,0)\varphi_2(1/2,0)}{a_1^{-1}\varphi_2(1/2,0)}-(h_1+[q_1]).
  \end{split}
\end{equation}
One can deal with the case $\varphi_2(1/2,0)=0$, shifting the spectrum by a constant.

\subsection{Inverse problems: existence and algorithm}

In this subsection, we prove the first of our main results. The following theorem establishes the existence of solution for
the considered half-inverse problem.

\begin{theorem}\label{1}
Let data $S:=\left\{a_1, h_2, q_2(x), \{\lambda_n^2\}_{n \ge 0}\right\}$
satisfy the following conditions:
\begin{enumerate}
   \item $\lambda_0^2<\lambda_1^2<\dots<\lambda_n^2<\dots$;
   \item $\lambda_n=n\pi+\frac{(-1)^na+b}{n\pi}+\frac{\beta_n}{n}$, $\{\beta_n\}_{n\ge 0}\in l^2$, $a,b\in\mathbb{R}$ and,
         if $a_1 = 1$, then $a \ne 0$;
   \item The function $M(\mu) := \frac{\varphi_1(1/2,\lambda)}{\varphi'_1(1/2,\lambda)}$, $\mu = \lambda^2$,
            is a Nevanlinna function of $\mu$, where $\varphi_1(1/2, \lambda)$ and $\varphi'(1/2, \lambda)$
            are the functions, constructed in Section~2.2.
\end{enumerate}
Then there exist a real-valued function $q_1(\cdot)\in L^2(0,1/2)$ and real numbers $h_1$, $a_2$, such that the spectrum
of the problem (\ref{2.1})-(\ref{2.4}), generated by $(a_1, a_2, h_1, h_2, q_1, q_2)$, coincides with $\{\lambda_n^2\}_{n\geq 0}$.
\end{theorem}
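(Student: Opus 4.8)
The plan is to turn the three hypotheses into the ingredients of a classical inverse Sturm-Liouville problem on the half-interval $(0,1/2)$ and invoke the known solvability theory. First I would use hypotheses (1) and (2) to carry out the construction of Section~2.2: since $\{\lambda_n^2\}$ satisfies the growth/asymptotics assumptions, the product $\Phi(\lambda)$ in~\eqref{s1} is a well-defined entire function with the representation~\eqref{2.8}, and the interpolation formulas~\eqref{2.17} and~\eqref{2.24} produce functions $\varphi_1(1/2,\lambda)$ and $\varphi_1'(1/2,\lambda)$ of the correct exponential type $1/2$ lying in the appropriate class. The constants $[q_1]+h_1$ and $a_2$ are recovered from~\eqref{om1} and~\eqref{defa2}; note that the case $a_1=1$ is excluded from needing $a\neq 0$ only to keep~\eqref{om1}--\eqref{defa2} consistent, which is exactly why hypothesis (2) carries that proviso.

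Next I would argue that the pair $(\varphi_1(1/2,\lambda),\varphi_1'(1/2,\lambda))$ is the Cauchy data at $x=1/2$ of a genuine Sturm-Liouville problem on $(0,1/2)$. Concretely, consider the two "spectra" given by the zeros of $\varphi_1(1/2,\lambda)$ and of $\varphi_1'(1/2,\lambda)$; these are the Dirichlet and Neumann spectra (at the right endpoint $x=1/2$) of the equation $-y''+q_1 y=\lambda^2 y$ on $(0,1/2)$ with the boundary condition $y'(0)-h_1 y(0)=0$ at the left endpoint. Hypothesis (3), that $M(\mu)=\varphi_1(1/2,\lambda)/\varphi_1'(1/2,\lambda)$ is a Nevanlinna (Herglotz) function of $\mu=\lambda^2$, is precisely the interlacing/positivity condition that characterizes when such a pair arises from a real $q_1\in L^2(0,1/2)$ together with a real $h_1$. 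Here I would appeal to the standard solvability theorem for the inverse problem by two spectra (Borg/Levitan-type, or equivalently the Weyl-$m$-function inverse theorem), using the Marchenko transformation operator method as announced in the introduction: the asymptotics built into $\varphi_1,\varphi_1'$ by~\eqref{2.5}--\eqref{2.6} match the required form, and the Nevanlinna property plus those asymptotics give exactly the hypotheses of that theorem. This yields $q_1\in L^2(0,1/2)$ real-valued and $h_1\in\mathbb{R}$ whose characteristic functions at $1/2$ coincide with the constructed $\varphi_1(1/2,\lambda)$, $\varphi_1'(1/2,\lambda)$; one then reads off $h_1$ from $\varphi_1'(0,\lambda)$ and checks $[q_1]+h_1$ against the value fixed above (consistency here is automatic from~\eqref{2.5}).

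Finally I would close the loop: with $q_1,h_1,a_2$ now in hand, and $q_2,h_2,a_1$ given, form the characteristic function of the full problem~\eqref{2.1}--\eqref{2.4} via the explicit product~\eqref{2.7}. By construction its factors $\varphi_1(1/2,\lambda),\varphi_1'(1/2,\lambda),\varphi_2(1/2,\lambda),\varphi_2'(1/2,\lambda)$ are exactly the ones entering the interpolation identities, and the interpolation was set up so that the resulting function agrees with the product~\eqref{s1}; hence the two entire functions coincide (they have the same zeros, counting the behavior at the excluded node, and the same leading constant $C$ fixed by~\eqref{2.8}). Therefore the spectrum of the reconstructed problem is exactly $\{\lambda_n^2\}_{n\ge 0}$.

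I expect the main obstacle to be the verification that hypothesis~(3) genuinely feeds into the classical half-interval solvability theorem — that is, checking that the Nevanlinna property of $M(\mu)$, together with the exponential-type and $l^2$-asymptotic information on $\varphi_1(1/2,\cdot)$ and $\varphi_1'(1/2,\cdot)$ coming from~\eqref{2.17} and~\eqref{2.24}, is equivalent to (not merely implied by) the hypotheses needed to run Marchenko's method and produce a real $L^2$ potential. A secondary technical point is the bookkeeping around possible zeros at $\lambda=0$ (the shift-by-a-constant remarks after~\eqref{s1} and after~\eqref{ew}) and the degenerate role of $a_1=1$, which must be handled so that $a_2$ and $h_1$ are well-defined from~\eqref{om1}--\eqref{defa2}; these are routine but need to be stated carefully to make the existence claim airtight.
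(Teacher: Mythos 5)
Your proposal is correct and follows essentially the same route as the paper: both use hypotheses (1)--(2) to build $\varphi_1(1/2,\lambda)$ and $\varphi_1'(1/2,\lambda)$ by the interpolation of Section~2.2, invoke the Nevanlinna property (3) to obtain the interlacing of their zeros and thereby the hypotheses of Marchenko's two-spectra solvability theorem (Theorem~3.4.1 in \cite{MA}, realized via the Jost function and the Marchenko equation), and close the loop by substituting the resulting characteristic functions back into \eqref{2.7} to recover $\Phi(\lambda)$ with zeros $\{\pm\lambda_n\}_{n\ge 0}$.
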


\begin{proof}
Let the data $S$ fulfill the conditions (1)-(3), and $\varphi_1(1/2,\lambda)$, $\varphi_1'(1/2, \lambda)$
be the functions, constructed in Section~2.2.
Since the function $M(\mu) = \frac{\varphi_1(1/2,\lambda)}{\varphi'_1(1/2,\lambda)}$ belongs to the Nevanlinna class,
then its zeros $\{(\nu_n^{(0)})^2\}_{n\geq 0}$ interlace with its poles $\{(\mu_n^{(0)})^2\}_{n\geq 0}$:
\begin{equation*}
  (\mu_0^{(0)})^2<(\nu_0^{(0)})^2<\cdots<(\mu_n^{(0)})^2<(\nu_n^{(0)})^2<\cdots.
\end{equation*}
The expressions~(\ref{2.5}) and (\ref{2.6}) imply that
\begin{equation*}
  \nu_n^{(0)}=(2n+1)\pi+\frac{\kappa}{(2n+1)\pi}+\frac{\beta_n}{n},
\end{equation*}
\begin{equation*}
  \mu_n^{(0)}=2n\pi+\frac{\kappa}{2n\pi}+\frac{\beta_n}{n},
\end{equation*}
where
\begin{equation*}
  \begin{split}
  \kappa=2b-\frac{2a_2}{a_1+a_1^{-1}}-2h_2-2[q_2].
  \end{split}
\end{equation*}
Here $a_1, h_2$ and $[q_2]$ are known, and $a_2$ is constructed by~\eqref{defa2}.
Thus, the sets $\{(\nu_n^{(0)})^2\}_{n\geq 0}$ and $\{(\mu_n^{(0)})^2\}_{n\geq 0}$ satisfy the conditions of Theorem 3.4.1 in \cite{MA},
so there exists a unique real-valued function $q_1(\cdot)\in L^2(0,1/2)$, that generates Robin-Dirichlet and Robin-Neumann problems
on $[0,1/2]$ with the spectra $\{(\nu_n^{(0)})^2\}_{n\geq 0}$ and $\{(\mu_n^{(0)})^2\}_{n\geq 0}$, respectively.

By the procedure described in \cite{MA} one can find $q_1(x)$ and $h_1$ from $\varphi_1(1/2,\lambda)$ and $\varphi'_1(1/2,\lambda)$.
Let us describe this procedure in more detail.

Consider the corresponding prolonged Sturm-Liouville problem on the half-line:
\begin{equation*}
  \left\{\begin{array}{l}
  -y''+q(x)y=\lambda^2y, \quad x\in (0,\infty),\\
  y'(0)-h_1y(0)=0
  \end{array}\right.
\end{equation*}
with
\begin{equation*}
  q(x):=
  \begin{cases}
  q_1(x), \qquad &x\in[0,1/2]\\
  0,      \qquad &x\in[1/2,+\infty).
  \end{cases}
\end{equation*}
The so-called Jost solution $f(x,\lambda)$ of the above problem,
approaching $e^{i\lambda x}[1+o(1)]$ as $x\rightarrow+\infty$, $\lambda\in\mathbb{C}^{+}$,
can be represented in terms of the transformation operator:
\begin{equation}\label{s9}
f(x,\lambda)=e^{i\lambda x}+\int_x^{+\infty}K(x,t)e^{i\lambda t}dt,
\end{equation}
where $K(x,t)$ is the kernel function.
Since the potential vanishes for $x\geq1/2$, so the Jost solution equals $e^{i\lambda x} $. Then
\begin{equation*}
  f\left(1/2,\lambda\right)=e^{i\frac{\lambda}{2}}, \quad f'\left(1/2,\lambda\right)=i\lambda e^{i\frac{\lambda}{2}}.
\end{equation*}

Denote by $\psi_1(x,\lambda)$ the solution of Eq.(\ref{2.1}) under the initial conditions $\psi_1(0,\lambda)=0=\psi'_1(0,\lambda)-1$.
Then $\varphi_1(x,\lambda)$ and $\psi_1(x,\lambda)$ are two independent solutions of Eq.(2.1). Thus, when $0\leq x\leq 1/2$,
we have
\begin{equation*}
  f(x,\lambda)=c_1\varphi_1(x,\lambda)+c_2\psi_1(x,\lambda),
\end{equation*}
where $c_1$ and $c_2$ are constants.
In particular,
\begin{equation*}
    c_1\varphi_1\left(1/2,\lambda\right)+c_2\psi_1\left(1/2,\lambda\right)
    =f\left(1/2,\lambda\right)=e^{i\frac{\lambda}{2}},
\end{equation*}
\begin{equation*}
    c_1\varphi'_1\left(1/2,\lambda\right)+c_2\psi'_1\left(1/2,\lambda\right)
    =f'\left(1/2,\lambda\right)=i\lambda e^{i\frac{\lambda}{2}}.
\end{equation*}
Since $\varphi_1(1/2,\lambda)\psi'_1(1/2,\lambda)-\varphi'_1(1/2,\lambda)\psi_1(1/2,\lambda)\equiv 1$,
we get
\begin{equation*}
  c_1=e^{i\frac{\lambda}{2}}\left(\psi'_1\left(1/2,\lambda\right)-i\lambda\psi_1\left(1/2,\lambda\right)\right),
\end{equation*}
\begin{equation*}
  c_2=e^{i\frac{\lambda}{2}}\left(i\lambda\varphi_1\left(1/2,\lambda\right)-\varphi'_1\left(1/2,\lambda\right)\right).
\end{equation*}
The Jost function, corresponding to the Jost solution $f(x, \lambda)$, is defined as follows:
\begin{equation}\label{s3}
  f(\lambda):=f'(0,\lambda)-h_1f(0,\lambda)=e^{i\frac{\lambda}{2}}\left(i\lambda\varphi_1
  \left(1/2,\lambda\right)-\varphi'_1\left(1/2,\lambda\right)\right).
\end{equation}
Since the Jost function $f(\lambda)$ is known, we can construct the $S-$function of the problem on the semi-axis (see \cite{MA}):
\begin{equation}\label{s4}
  S(\lambda)=-\frac{f(-\lambda)}{f(\lambda)},
\end{equation}
and also the following function
\begin{equation}\label{s5}
  F_{S}(x)=\frac{1}{2\pi}\int_{-\infty}^{\infty}[S(\lambda)-1]e^{i\lambda x}d{\lambda}.
\end{equation}
Further we solve the Marchenko equation with respect to $K(x, t)$:
\begin{equation}\label{s6}
  K(x,t)+F_{S}(x+t)+\int_{x}^{\infty}K(x,s)F_S(s+t)ds=0,\ t\geq x,
\end{equation}
and construct the potential by the formula
\begin{equation}\label{s7}
  q_1(x)=-2\frac{dK(x,x)}{dx},
\end{equation}
where $q_1(x)$ is a real-valued function in $L^{2}(0,1/2)$, and
\begin{equation}\label{s8}
h_1=\frac{f'(0,-\lambda)+f'(0,\lambda)S(\lambda)}{f(0,-\lambda)+f(0,\lambda)S(\lambda)}.
\end{equation}

The constructed potential $q_1(x)$ generates the Robin-Dirichlet problem
with the characteristic function $\varphi_1(1/2,\lambda)$ and the Robin-Neumann problem with the
characteristic function $\varphi'_1(1/2,\lambda)$. Substituting these functions into (\ref{2.7}),
we obtain the initial characteristic function $\Phi(\lambda)$ with the set of zeros $\{\pm \lambda_n\}_{n\ge 0}$.
Consequently, the numbers $\{ \lambda_n^2 \}_{n \ge 0}$ are the eigenvalues
of the problem (\ref{2.1})-(\ref{2.4}), generated by $(a_1, a_2, h_1, h_2, q_1, q_2)$.
The proof is finished.
\end{proof}

The proof of Theorem~\ref{1} leads to the following algorithm for solving the inverse problem.

\medskip

\emph{Algorithm 1.} Let the data $S:=\left\{a_1, h_2, q_2(x), \{\lambda_n^2\}_{n\ge 0}\right\}$,
satisfying the conditions (1)-(3) of Theorem~\ref{1}, be given. We have to costruct $q_1(x)$, $h_1$ and $a_2$.\vspace{0.2cm}

\textbf{Step 1.} Construct $\Phi(\lambda)$, using $a_1$ and $\{\lambda_n^2\}_{n\ge 0}$, via (\ref{s1}).\vspace{0.2cm}

\textbf{Step 2.} Find $[q_1]+h_1$ and $a_2$, using the given $\{ \lambda_n^2 \}_{n \ge 0}$, $h_2$, $q_2$ and $a_1$, via
\eqref{defga}-\eqref{defa2}.

\textbf{Step 3.} Find $\varphi_2(1/2,\lambda)$ with its zeros $\{\nu_n^{(1)}\}_{n\in \mathbb{Z}^0}$ and
$\varphi_2'(1/2,\lambda)$ with its zeros $\{\mu_n^{(1)}\}_{n\in \mathbb{Z}^0}$, by using the given $h_2$ and $q_2(x)$,
via (\ref{2.5}) and (\ref{2.6}).\vspace{0.2cm}

\textbf{Step 4.} Construct $\psi_1^{(1)}(\lambda)$ via (\ref{2.12}) and (\ref{2.17}).\vspace{0.2cm}

\textbf{Step 5.} Find $\varphi_1(1/2,\lambda)$ by substituting $\psi_1^{(1)}(\lambda)$ into (\ref{2.5}).\vspace{0.2cm}

\textbf{Step 6.} Construct $\psi_2^{(1)}(\lambda)$ via (\ref{2.19}), (\ref{ew}) and (\ref{2.24}).\vspace{0.2cm}

\textbf{Step 7.} Find $\varphi_1'(1/2,\lambda)$ by substituting $\psi_2^{(1)}(\lambda)$ into (\ref{2.6}).\vspace{0.2cm}

\textbf{Step 8.} Find the Jost function $f(\lambda)$ from (\ref{s3}).\vspace{0.2cm}

\textbf{Step 9.} Construct $S(\lambda)$, $F_S(\lambda)$, and $K(x,t)$ via (\ref{s4})-(\ref{s6}), respectively.\vspace{0.2cm}

\textbf{Step 10.} Construct $q_1(x)$ from $K(x,t)$, via (\ref{s7}).\vspace{0.2cm}

\textbf{Step 11.} Find $f(0,\lambda)$ and $f'(0,\lambda)$ from $K(x,t)$ via (\ref{s9}).\vspace{0.2cm}

\textbf{Step 12.} Finally, construct $h_1$ via (\ref{s8}).

\medskip

\begin{remark} Note that $\dfrac{\varphi_1'(1/2, \lambda)}{\varphi_1(1/2, \lambda)}$ is the Weyl function, associated with the Sturm-Liouville problem on the interval $(0, 1/2)$ with the potential $q_1$. Therefore, one can also apply the method of spectral mappings to recover the potential from the Weyl function (see \cite{FY}). Another way to solve the half inverse problem with the discontinuity in the middle point of the interval considered is to adapt the methods from \cite{BO1,BO2}
for solving partial inverse problems on star-shaped graphs.
\end{remark}

\section{The case $0<d<1/2$}

This section deals with the existence result, corresponding to Theorem B, and the reconstructing algorithm for solution.
We follow the scheme of Section~2. First, the properties of the eigenvalues and of the characteristic function are studied,
then the functions $\varphi_1(1/2, \lambda)$ and $\varphi_1'(1/2,\lambda)$ are recovered, and, finally, these functions
are used for reconstruction of the
potential on the interval $(0, 1/2)$. However, there are some important differences. First, we have to recover also the parameters
$d$, $a_1$ and $a_2$ of the discontinuity. Second, at the final step we obtain the problem with the discontinuity on the interval $(0, 1/2)$,
and use the method of spectral mappings \cite{FY} to solve this problem.

\subsection{Direct problems}

In this subsection, the properties of the eigenvalues are provided, and several important relations for the characteristic
function are derived (see the formulas~\eqref{311},~\eqref{312} and~\eqref{313} below). We also show, how one can recover the characteristic function,
the discontinuity position $d$ and the jump $a_1$ from the eigenvalues.

In the case $0<d<1/2$, we consider (\ref{SL})-(\ref{jcs}) as a boundary value problem on a star graph, and rewrite it as follows:
\begin{eqnarray}
&& -y_j''+q_j(x)y_j=\lambda^2 y_j, \quad x\in \left(0, 1/2\right)\setminus\{d\}, \quad j=1, 2, \label{31}\\
&& y_j'(0)-h_j y_j(0)=0, \label{32}\\
&& y_1(d+)=a_1 y_1(d-), \label{33}\\
&& y_1'(d+)=a_1^{-1} y_1'(d-)+a_2 y_1(d-), \label{34}\\
&& y_1\left(1/2\right)=y_2\left(1/2\right), \label{35}\\
&& y_1'\left(1/2\right)+y_2'\left(1/2\right)=0. \label{36}
\end{eqnarray}
Here $q_j\in L^2(0, 1/2)$ for $j=1, 2,$ and $q_1 (x):=q(x)|_{[0, 1/2] \backslash\{d\}}$, $q_2(x):=q(1-x)|_{[0, 1/2]}$.

Denote by $\varphi_1 (x, \lambda)$ the solution of Eq.\eqref{31} for $j=1$, satisfying the initial conditions
$\varphi_1 (0, \lambda)-1=\varphi_1'(0, \lambda)-h_1=0$ and the continuity conditions \eqref{33} and \eqref{34}.
Then, according to \cite{YU}, one has
\begin{eqnarray}
&& \varphi_1 \left(1/2, \lambda\right)=b_1\cos\frac{\lambda}{2}+b_2\cos\lambda\left(\frac{1}{2}-2d\right)+f_1\left(1/2\right)\frac{\sin\frac{\lambda}{2}}{\lambda} \nonumber\\
&& \quad \quad \quad \quad \qquad +f_2\left(1/2\right)\frac{\sin\lambda\left(2d-1/2\right)}{\lambda}+\frac{\psi_1(\lambda)}{\lambda}, \label{37}
\end{eqnarray}
and
\begin{eqnarray}
&& \varphi_1'\left(1/2, \lambda\right)=\lambda \left(-b_1\sin\frac{\lambda}{2}+b_2\sin\lambda\left(2d-1/2\right)\right)+f_1\left(1/2\right)\cos\frac{\lambda}{2} \nonumber\\
&& \quad \quad \quad \quad \qquad -f_2\left(1/2\right)\cos\lambda\left(2d-1/2\right)+\psi_2(\lambda), \label{38}
\end{eqnarray}
where $\psi_j\in \mathcal{L}^{1/2}$ ($j=1,2$), and
\begin{eqnarray*}
&& f_1\left(1/2\right)=b_1\left(h_1+[q_1]\right)+\frac{a_2}{2},\\
&& f_2\left(1/2\right)=b_2\left(h_1-[q_1]+\int_0^{d}q_1(x)dx\right)-\frac{a_2}{2},\\
&& b_1=\frac{a_1+a_1^{-1}}{2}, b_2=\frac{a_1-a_1^{-1}}{2}.
\end{eqnarray*}
Denote by $\varphi_2 (x, \lambda)$ the solution of Eq.\eqref{31} for $j=2$, satisfying the initial conditions
$\varphi_2 (0, \lambda)=1$ and $\varphi_2'(0, \lambda)=h_2$. Then, according to \cite{FY,MA}, one has
\begin{eqnarray}
&& \varphi_2 \left(1/2, \lambda\right)=\cos\frac{\lambda}{2}+\left(h_2+[q_2]\right)\frac{\sin\frac{\lambda}{2}}{\lambda}+\frac{\psi_3(\lambda)}{\lambda}, \label{39}
\end{eqnarray}
and
\begin{eqnarray}
&& \varphi_2'\left(1/2, \lambda\right)=-\lambda\sin\frac{\lambda}{2}+\left(h_2+[q_2]\right)\cos\frac{\lambda}{2}+\psi_4(\lambda), \label{310}
\end{eqnarray}
where $\psi_j\in \mathcal{L}^{1/2}$ $(j=3,4)$.

The solution of the problem \eqref{31}-\eqref{34} possesses the form
$y_j=c_j\varphi_j(x,\lambda)$
$(j=1, 2)$, where $c_j$ are constants. Taking \eqref{35} and \eqref{36} into account, we represent the characteristic function of the problem \eqref{31}-\eqref{36} in the form
\begin{eqnarray}
&& \Phi(\lambda)=\varphi_1\left(1/2, \lambda\right)\varphi_2'\left(1/2, \lambda\right)+\varphi_1'\left(1/2, \lambda\right)\varphi_2\left(1/2, \lambda\right).
\label{311}
\end{eqnarray}
Using \eqref{37}-\eqref{310}, we derive
\begin{eqnarray}
\Phi(\lambda)\!=\!-\lambda(b_1\sin\lambda\!+\!b_2\sin\lambda(1-2d))\!+\!\omega_1\cos\lambda\!+\!\omega_2\cos\lambda(1-2d)\!+\!\psi(\lambda),
\label{312}
\end{eqnarray}
where $\psi\in \mathcal{L}^1$ and
\begin{eqnarray*}
&& \omega_1=b_1\left(h_1+h_2+[q_1+q_2]\right)+\frac{a_2}{2},\\
&& \omega_2=b_2\left(h_2-h_1+[q_1+q_2]-\int_0^{d}q_1(x)dx\right)+\frac{a_2}{2}.
\end{eqnarray*}

The eigenvalues $\{ \lambda_n^2 \}_{n \ge 0}$ of the boundary value problem \eqref{31}-\eqref{36} are real and simple.
They are supposed to be numbered in such a way, that the following asymptotic relation holds (see \cite{YU}):
$$
\lambda_n=\lambda_n^{0}+O\left(\frac{1}{\lambda_n^{0}}\right), \quad n \to \infty,
$$
where $\lambda_n^{0}$ are the zeros of the function $\Phi^{0}(\lambda)=-\lambda(b_1\sin\lambda+b_2\sin\lambda(1-2d))$,
$\lambda_n^0 \to +\infty$ as $n \to \infty$.

By using $\{\lambda_n^2\}_{n\ge 0}$, we can construct
\begin{eqnarray}
&& \Phi(\lambda)=C\prod_{n=0}^{\infty}\left(1-\frac{\lambda^2}{\lambda_n^2}\right),
\label{313}
\end{eqnarray}
where $C$ is a constant.

Introduce the function
$$
    \Psi(\lambda) = b_1^{-1} \Phi(\lambda) = C_0 \prod_{n = 0}^{\infty} \left( 1 - \frac{\lambda^2}{\lambda_n^2} \right), \quad C_0 = b_1^{-1} C.
$$
Taking (\ref{312}) into account, we find the constant
\begin{eqnarray}\label{001}
    C_0 = \frac{1}{2} \lim_{\tau \to +\infty} \tau e^{\tau} \left( \prod_{n = 0}^{\infty} \biggl( 1 + \frac{\tau^2}{\lambda_n^2} \biggr)\right)^{-1}.
\end{eqnarray}	
The relation (\ref{312}) implies
$$
    \lambda^{-1} \Psi(\lambda) + \sin \lambda = -\frac{b_2}{b_1} \sin \lambda (1 - 2 d) + O\left( \lambda^{-1} \right), \quad \lambda \in \mathbb R, \quad \lambda \to +\infty.
$$
Using the latter formula, we determine $a_1$ and $d$. Clearly,
\begin{eqnarray}\label{314}
    \frac{b_2}{b_1} = -\overline{\lim_{\lambda \to +\infty}} (\lambda^{-1} \Psi(\lambda) + \sin \lambda), \quad a_1 = \sqrt{\frac{1 + (b_2/b_1)}{1 - (b_2/b_1)}}.
\end{eqnarray}
Having $a_1$, we find $b_1$, $b_2$ and $\Phi(\lambda) = b_1 \Psi(\lambda)$.
According to (\ref{312}) and Rouche's theorem, the analytic function $(\lambda^{-1} \Psi(\lambda) + \sin \lambda)$ has a subsequence of zeros $\{ \theta_n \}_{n \ge n_0}$ with the following asymptotic behavior:
$$
   \theta_n = \frac{\pi n}{1 - 2 d} + O\left(n^{-1}\right), \quad n \to +\infty.
$$
Consequently,
\begin{eqnarray}\label{314d}
    d = \lim_{n \to \infty} \left( \frac{1}{2} - \frac{\pi n}{2 \theta_n}\right).
\end{eqnarray}

Introduce the function
\begin{eqnarray*}
&& h_2(\lambda):=\Phi(\lambda)+\lambda(b_1\sin\lambda+b_2\sin\lambda(1-2d))\\
&& \quad \quad \ \ =\omega_1\cos\lambda+\omega_2\cos\lambda(1-2d)+\psi(\lambda),
\end{eqnarray*}
then
\begin{eqnarray}
&& \omega_1=2\lim_{\tau \to +\infty}e^{-\tau}h_2(i\tau),
\label{315}\\
&& \omega_2=\lim_{\substack{m \to +\infty\\   m\in\mathbb{N}}}  (h_2(\lambda)-\omega_1\cos\lambda)|_{\lambda=\frac{2m\pi}{1-2d}}. \label{316}
\end{eqnarray}

\subsection{Reconstruction of $\varphi_1(1/2, \lambda)$ and $\varphi_1'(1/2, \lambda)$}

In this subsection, we focus on the following question.
Given the eigenvalues $\{\lambda_n^2\}_{n\ge 0}$, the potential $q_2$ and the coefficient $h_2$ of the right boundary condition,
how to find $a_1$, $a_2$, $d$, $q_1$ and $h_1$?
Section~3.1 presents algorithms for recovering $a_1$, $d$, $\omega_1$ and $\omega_2$ from $\{\lambda_n^2\}_{n\ge 0}$
(see \eqref{314}, \eqref{314d}, \eqref{315} and \eqref{316}). It remains to find $a_2$, $q_1$ and $h_1$. In order to do this,
we first reconstruct the characteristic functions $\varphi_1(1/2, \lambda)$ and $\varphi_1'(1/2, \lambda)$,
associated with the left half-interval $(0, 1/2)$. Our method is based on interpolation of entire functions
(see the relations \eqref{s15} and \eqref{s17} below).

Note that the potential $q_2$ and the coefficient $h_2$ are given,
so we can find the function $\varphi_2(1/2, \lambda)$ and its zeros $\{\nu_n^{(1)}\}_{n\in \mathbb{Z}^0}$ as well as
the function $\varphi_2'(1/2, \lambda)$ and its zeros $\{\mu_n^{(1)}\}_{n\in \mathbb{Z}^0}$
(recall that $\mathbb Z^0 := \{\pm 0, \pm 1, \pm 2, \ldots \}$). According to \cite{YA}, the following asymptotic
formulas are valid for $n \ge 0$:
\begin{eqnarray}
&& \nu_n^{(1)}=(2n+1)\pi+\frac{2[q_2]+2h_2}{(2n+1)\pi}+\frac{\beta_n}{n},
\label{317} \\
&& \mu_n^{(1)}=2n\pi+\frac{[q_2]+h_2}{n\pi}+\frac{\beta_n}{n}.
\label{318}
\end{eqnarray}
We suppose that $\nu_{-n}^{(1)} = -\nu_n^{(1)}$, $\mu_{-n}^{(1)} = -\mu_n^{(1)}$.

Substituting $\lambda=\nu_n^{(1)}$ into Eq.\eqref{311}, we get
\begin{eqnarray}
&& \varphi_1\left(1/2, \nu_n^{(1)}\right)=\frac{\Phi\left(\nu_n^{(1)}\right)}{\varphi_2'\left(1/2, \nu_n^{(1)}\right)}.
\label{319}
\end{eqnarray}
Note that
\begin{equation}\label{s14}
\left\{\begin{array}{l}
f_1\left(1/2\right)=\omega_1-b_1\left(h_2+[q_2]\right),\\
f_2\left(1/2\right)=-\omega_2+b_2\left(h_2+[q_2]\right),
\end{array}
\right.
\end{equation}
which are already known. To determine $\varphi_1(1/2, \lambda)$ from \eqref{37}, it is sufficient to recover $\psi_1(\lambda)$.
By selecting $\{\nu_n^{(1)}\}_{n\in \mathbb{Z}^0}$ as the nodes of interpolation, we will find the function $\psi_1(\lambda)$.
In order to do this, we first find
the values of the function $\psi_1(\lambda)$ at the nodes. From \eqref{37}, \eqref{38}, \eqref{317} and \eqref{319} we have
\begin{eqnarray}
&& \psi_1\left(\nu_n^{(1)}\right)=\nu_n^{(1)}\left[\frac{\Phi\left(\nu_n^{(1)}\right)}{\varphi_2'\left(1/2, \nu_n^{(1)}\right)}-b_1\cos\frac{\nu_n^{(1)}}{2}-b_2\cos\nu_n^{(1)}\left(1/2-2d\right)\right. \nonumber  \\
&& \quad \quad \quad \quad \quad \left.-f_1\left(1/2\right)\frac{\sin\frac{\nu_n^{(1)}}{2}}{\nu_n^{(1)}}-f_2\left(1/2\right)\frac{\sin\nu_n^{(1)}\left(2d-1/2\right)}{\nu_n^{(1)}}\right]. \label{320}
\end{eqnarray}

Consider the indices $n \ge 0$. In order to estimate $\psi_1\left(\nu_n^{(1)}\right)$, we need the following relations:
\begin{eqnarray*}
&& \Phi\left(\nu_n^{(1)}\right)=-2b_1\left([q_2]+h_2\right)+\nu_n^{(1)}b_2\sin\nu_n^{(1)}(1-2d)-\omega_1\cos\nu_n^{(1)} \\
&& \quad \quad \quad \quad \quad \ -\omega_2\cos\nu_n^{(1)}(1-2d)+\beta_n, \\
&& \varphi_2'\left(1/2, \nu_n^{(1)}\right)=(-1)^n(2n+1)\pi+\beta_n.
\end{eqnarray*}
Consequently,
\begin{eqnarray}
&& \frac{\Phi\left(\nu_n^{(1)}\right)}{\varphi_2'\left(1/2, \nu_n^{(1)}\right)}=\frac{1}{(-1)^{n-1}(2n+1)\pi}\left[2b_1\left([q_2]+h_2\right)-b_2\nu_n^{(1)}\sin\nu_n^{(1)}(1\!\!-\!\!2d)\!
\right. \nonumber  \\
&& \quad \quad \quad \quad \quad \quad \quad \ \ \quad +\omega_1\cos\nu_n^{(1)}\!+\!\omega_2\cos\nu_n^{(1)}(1\!\!-\!\!2d)\!+\!\beta_n]. \label{321}
\end{eqnarray}
In addition,
\begin{eqnarray}
&& \cos\nu_n^{(1)}=-1+\beta_n, \label{322} \\
&& \cos\frac{\nu_n^{(1)}}{2}=(-1)^{n-1}\left[\frac{[q_2]+h_2}{(2n+1)\pi}+\frac{\beta_n}{n}\right], \label{323}\\
&& \sin\frac{\nu_n^{(1)}}{2}=(-1)^{n}+\beta_n.
\label{324}
\end{eqnarray}
Substituting \eqref{321}-\eqref{324} into \eqref{320}, we get
\begin{eqnarray*}
&&  \psi_1\left(\nu_n^{(1)}\right)=(-1)^{n}\left[2b_1\left([q_2]+h_2\right)-\nu_n^{(1)}b_2\sin\nu_n^{(1)}(1-2d)\right. \\
&& \quad \quad \quad \quad \quad -\omega_1+\omega_2\cos\nu_n^{(1)}(1-2d)]+(-1)^{n}b_1\left([q_2]+h_2\right)\\
&& \quad \quad \quad \quad \quad -\nu_n^{(1)}b_2\cos\nu_n^{(1)}(1-2d)+(-1)^{n-1}f_1\left(1/2\right)\\
&& \quad \quad \quad \quad \quad -f_2\left(1/2\right)\sin\nu_n^{(1)}\left(2d-1/2\right)+\beta_n,
\end{eqnarray*}
which is equivalent to
\begin{eqnarray}
&&  \psi_1\left(\nu_n^{(1)}\right)=(-1)^{n-1}\left[2b_1\left([q_2]+h_2\right)-\omega_1-b_1\left([q_2]+h_2\right)\right.
\nonumber  \\
&& \quad \quad \quad \quad\quad \left.+f_1\left(1/2\right)\right] +b_2\nu_n^{(1)}\left[(-1)^{n}\sin\nu_n^{(1)}(1-2d)-\cos\nu_n^{(1)}\left(1/2-2d\right)\right]\nonumber  \\
&& \quad \quad \quad \qquad +(-1)^{n-1}\omega_2\cos\nu_n^{(1)}(1-2d)\nonumber\\
&& \quad \quad \quad \qquad-f_2\left(1/2\right)\sin\nu_n^{(1)}\left(2d-1/2\right)+\beta_n.
\label{325}
\end{eqnarray}
Note that the term in the first parentheses in \eqref{325} vanishes, thus
\begin{eqnarray}
&&  \psi_1\left(\nu_n^{(1)}\right)=b_2\nu_n^{(1)}\left[(-1)^{n}\sin\nu_n^{(1)}(1-2d)-\cos\nu_n^{(1)}\left(1/2-2d\right)\right]\nonumber  \\
&& \quad \quad \quad \qquad +(-1)^{n-1}\omega_2\cos\nu_n^{(1)}\left(\!1\!\!-\!\!2d\right)\nonumber\\
&& \quad \quad \quad \qquad -f_2\left(\!1/2\right)\sin\nu_n^{(1)}\left(2d\!\!-\!\!1/2\!\right)+\beta_n. \label{326}
\end{eqnarray}
By trigonometric calculation and \eqref{317}, we have
\begin{eqnarray*}
&& \sin\nu_n^{(1)}(1-2d)=(-1)^{n}\cos\nu_n^{(1)}\left(1/2-2d\right)+(-1)^{n-1}\frac{[q_2]+h_2}{(2n+1)\pi} \\
&& \quad \quad \quad \quad \quad \quad \quad \quad
\times\sin\nu_n^{(1)}\left(1/2-2d\right)+\frac{\beta_n}{n},\\
&& \cos\nu_n^{(1)}(1-2d)=(-1)^{n-1}\sin\nu_n^{(1)}\left(1/2-2d\right)+\beta_n.
\end{eqnarray*}
Substituting the latter formulas into \eqref{326}, we derive
\begin{eqnarray*}
&& \psi_1\left(\nu_n^{(1)}\right)=\left[\cos\nu_n^{(1)}\left(1/2-2d\right)-\frac{[q_2]+h_2}{(2n+1)\pi}\sin\nu_n^{(1)}\left(1/2-2d\right)\right.\\
&& \quad \quad \quad \quad \quad -\cos\nu_n^{(1)}\left(1/2-2d\right)\bigg]b_2\nu_n^{(1)}+\omega_2\sin\nu_n^{(1)}\left(1/2-2d\right)\\
&& \quad \quad \quad \quad \quad +f_2\left(1/2\right)\sin\nu_n^{(1)}\left(1/2-2d\right)+\beta_n\\
&& \quad \quad \quad \quad =-b_2\left([q_2]+h_2\right)\sin\nu_n^{(1)}\left(1/2-2d\right)+\left(\omega_2+f_2\left(1/2\right)\right)\\
&& \quad \quad \quad \quad \quad \times\sin\nu_n^{(1)}\left(1/2-2d)\right)+\beta_n\\
&& \quad \quad \quad \quad =\left[-b_2\left([q_2]+h_2\right)+\omega_2+f_2\left(1/2\right)\right]\sin\nu_n^{(1)}\left(1/2-2d\right)\\
&& \quad \quad \quad \quad \quad +\beta_n\\
&& \quad \quad \quad \quad =\beta_n, \quad n \ge 0.
\end{eqnarray*}
It is easy to check that $\psi_1(\nu_{-n}^{(1)}) = -\psi_1(\nu_n^{(1)})$,
so $\{ \psi_1(\nu_n^{1}) \}_{n \in \mathbb {Z}^0} \in l^2$.
Taking into account, that the function $\varphi_2(1/2, \lambda)$ is of sine type, we use interpolation (see \cite{LL}):
\begin{eqnarray}\label{s15}
\psi_1(\lambda)=\varphi_2\left(1/2, \lambda\right)\sum_{n\in \mathbb{Z}^0}\frac{\psi_1\left(\nu_n^{(1)}\right)}{\frac{d\varphi_2(1/2, \lambda)}{d\lambda}|_{\lambda=\nu_n^{(1)}}\left(\lambda-\nu_n^{(1)}\right)}.
\end{eqnarray}
Substituting $\psi_1(\lambda)$ into \eqref{37}, we obtain $\varphi_1(1/2, \lambda)$.

Similarly, we recover the function $\varphi_1'(1/2, \lambda)$. Replacing $\lambda$ by $\mu_n^{(1)}$ in Eq.\eqref{311}, we get
\begin{eqnarray}
\varphi_1'\left(1/2, \mu_n^{(1)}\right)=\frac{\Phi\left(\mu_n^{(1)}\right)}{\varphi_2\left(1/2, \mu_n^{(1)}\right)}. \label{327}
\end{eqnarray}
Using \eqref{37}, \eqref{38}, \eqref{318} and \eqref{327}, we obtain
\begin{eqnarray}\label{s16}
&& \psi_2\left(\mu_n^{(1)}\right)=\varphi_1'\left(1/2, \mu_n^{(1)}\right)-\mu_n^{(1)}\left(-b_1\sin\frac{\mu_n^{(1)}}{2}+b_2\sin\mu_n^{(1)}\left(2d-1/2\right)\right)\nonumber\\
&& \quad \quad \quad \quad \quad -f_1\left(1/2\right)\cos\frac{\mu_n^{(1)}}{2}+f_2\left(1/2\right)\cos\mu_n^{(1)}\left(2d-1/2\right)\nonumber\\
&& \quad \quad \quad \quad =\frac{\Phi\left(\mu_n^{(1)}\right)}{\varphi_2\left(1/2, \mu_n^{(1)}\right)}-\mu_n^{(1)}\left(-b_1\sin\frac{\mu_n^{(1)}}{2}+b_2\sin\mu_n^{(1)}\left(2d-1/2\right)\right)\nonumber\\
&& \quad \quad \quad \quad \quad -f_1\left(1/2\right)\cos\frac{\mu_n^{(1)}}{2}+f_2\left(1/2\right)\cos\mu_n^{(1)}\left(2d-1/2\right).
\end{eqnarray}
Similarly to the estimation of $\{\psi_1\left(\nu_n^{(1)}\right)\}_{n\in \mathbb{Z}^0}$,
we also get $\{ \psi_2\left(\mu_n^{(1)}\right) \}_{n \in \mathbb Z^0}\in l^2$. Taking into account, that the function
$g_2(\lambda):=\frac{\lambda\varphi_2'(1/2, \lambda)}{\lambda^2-(\mu_0^{(1)})^2}$ is of sine type, we use interpolation (see \cite{LL}):
\begin{eqnarray}\label{s17}
\psi_2(\lambda)=g_2(\lambda)\sum_{0\neq n=-\infty}^{+\infty}\frac{\psi_2\left(\mu_n^{(1)}\right)}{\frac{dg_2(\lambda)}{d\lambda}|_{\lambda=\mu_n^{(1)}}\left(\lambda-\mu_n^{(1)}\right)}
+g_2(\lambda)\frac{\psi_2(0)}{g_2'(0)\lambda}.
\end{eqnarray}
Substituting $\psi_2(\lambda)$ into \eqref{38}, we can obtain $\varphi_1'(1/2, \lambda)$.
Here
\begin{eqnarray}\label{yd}
\psi_2(0)=\frac{\Phi(0)-\varphi_1(1/2,0)\varphi_2'(1/2,0)}{\varphi_2(1/2,0)}-f_1(1/2)+f_2(1/2).
\end{eqnarray}
If $\varphi_2(1/2,0)=0$, then we have to shift the spectrum by a constant.

Now we consider the following boundary value problems:
\begin{eqnarray}
&& -y_1''+q_1(x)y_1=\lambda^2 y_1, \quad x\in \left(0, 1/2\right)\setminus\{d\}, \label{328}\\
&& y_1'(0)-h_1 y_1(0)=0, \label{329}\\
&& y_1(d+)=a_1 y_1(d-), \label{330}\\
&& y_1'(d+)=a_1^{-1} y_1'(d-)+a_2 y_1(d-), \label{331}\\
&& y_1\left(1/2\right)=0, \label{332} \\
\rm or \nonumber \\
&& y_1'\left(1/2\right)=0. \label{333}
\end{eqnarray}

It is easy to see that $\Delta_1(\lambda):=\varphi_1(1/2, \lambda)$ is the characteristic function of the problem
\eqref{328}-\eqref{332}, and $\Delta_2(\lambda):=\varphi_1'(1/2, \lambda)$ is the characteristic function of
the problem \eqref{328}-\eqref{331}, \eqref{333}. Certainly, we can get the zeros $\{\mu_n^{(1)}\}_{n\in \mathbb{Z}^0}$ of the
function $\Delta_2(\lambda)$. Define the norming constants
$$\alpha_n:=\int_0^{1/2}\varphi_1^2(x, \mu_n^{(1)})dx, \quad n\geq 0.$$
Then (see \cite{FY})
\begin{eqnarray} \label{s18}
\alpha_n=-\dot{\Delta}_2(\mu_n^{(1)})\Delta_1(\mu_n^{(1)}),
\end{eqnarray}
where $\dot{\Delta}_2(\lambda)=\frac{d}{d\lambda}\Delta_2(\lambda)$.
The data $\{(\mu_n^{(1)})^2, \alpha_n\}_{n\geqslant0}$ are called the spectral data of the boundary value problem \eqref{328}-\eqref{331},
\eqref{333}. Together with \eqref{328}-\eqref{331}, \eqref{333}, we consider another problem of the same form but with different
coefficients $\tilde{q}_1(x)$, $\tilde{h}_1$ and $\tilde{a}_2$. If a certain symbol $\gamma$ denotes an object related
to the previous problem, then $\tilde{\gamma}$ will denote the analogous object related to the new problem.
Let two boundary value problems be such that
\begin{eqnarray} \label{334}
&& \sum_{n=0}^{\infty}\xi_n|\mu_n^{(1)}|<\infty,
\end{eqnarray}
where $\xi_n:=|\mu_n^{(1)}-\tilde{\mu}_n^{(1)}|+|\alpha_n-\tilde{\alpha}_n|$.

\subsection{Inverse problems: existence and algorithm}

In this subsection, we first prove the following existence theorem, corresponding to the uniqueness Theorem~B.

\begin{theorem}\label{2}
Given data $S:=\{h_2, q_2(x), \{\lambda_n^2\}_{n\ge 0}\}$, satisfying the following conditions:

\begin{enumerate}
\item $\lambda_0^2<\lambda_1^2<\cdots<\lambda_n^2<\cdots$;\\

\item the series
$$
   \prod_{n = 0}^{\infty} \left( 1 - \frac{\lambda^2}{\lambda_n^2} \right)
$$
converges uniformly on compact sets of $\mathbb C$, and the function
$$
   \Psi(\lambda) = C_0 \prod_{n = 0}^{\infty} \left( 1 - \frac{\lambda^2}{\lambda_n^2} \right),
$$
where $C_0$ is defined by \eqref{001}, admits the representation
$$
    \Psi(\lambda) = -\lambda (\sin \lambda + \theta \sin \lambda (1 - 2d)) + \kappa_1 \cos \lambda + \kappa_2 \cos\lambda(1 - 2 d) + \psi_0(\lambda),
$$
where $\theta$, $\kappa_1$, $\kappa_2$ and $d$ are real constants, $-1< \theta < 1$, $|\theta| + |\kappa_2| > 0$, $0 < d < 1/2$, $\psi_0 \in \mathcal L^1$.
\end{enumerate}

Suppose that the reals $\{(\mu_n^{(1)})^2, \alpha_n\}_{n\geqslant0}$ constructed above
have the following properties: $\alpha_n>0$, $(\mu_n^{(1)})^2\neq(\mu_m^{(1)})^2$ $(n\neq m)$,
and there exist data $\{\tilde{q}_1(x), \tilde{a}_2, \tilde{h}_1\}$, such that \eqref{334} holds.

Then there exist a real-valued function $q_1(\cdot)\in L^2(0,1/2)$ and real numbers $a_1$, $a_2$, $d$ and $h_1$,
such that the spectrum of the problem \eqref{31}-\eqref{36}, generated by $(q_1, q_2)$ and $(a_1, a_2, d, h_1, h_2)$,
coincides with $\{\lambda_n^2\}_{n\geqslant0}$.
\end{theorem}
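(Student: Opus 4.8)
The plan is to run the algorithm from Section~3.2 backwards, exactly as in the proof of Theorem~\ref{1}, but with the extra bookkeeping needed for the discontinuity. First I would use condition~(2): the infinite product defines $\Psi(\lambda)$, and the required representation guarantees that $\Psi = b_1^{-1}\Phi$ for the right choice of parameters. Reading off $\theta = b_2/b_1$ via \eqref{314} gives $a_1$, hence $b_1$ and $b_2$; the asymptotics of the auxiliary zeros $\{\theta_n\}$ give $d$ via \eqref{314d}; and the limits \eqref{315}, \eqref{316} extract $\omega_1$ and $\omega_2$. Then $\Phi(\lambda) = b_1\Psi(\lambda)$ is determined, and from $\omega_1$ together with the given $h_2$, $[q_2]$ I recover $a_2$ (solving the linear relation $\omega_1 = b_1(h_1 + h_2 + [q_1+q_2]) + a_2/2$ jointly with the companion equation for $\omega_2$, the same way \eqref{om1}--\eqref{defa2} worked in the $d = 1/2$ case), and likewise $f_1(1/2)$, $f_2(1/2)$ via \eqref{s14}.

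Next I would carry out the interpolation step. From $q_2$, $h_2$ I build $\varphi_2(1/2,\lambda)$, $\varphi_2'(1/2,\lambda)$ and their zeros $\{\nu_n^{(1)}\}$, $\{\mu_n^{(1)}\}$. Sampling the (now known) function $\Phi$ at these zeros through \eqref{319}, \eqref{327} and subtracting the explicit main terms in \eqref{37}, \eqref{38} produces the interpolation data $\{\psi_1(\nu_n^{(1)})\}$, $\{\psi_2(\mu_n^{(1)})\}$, which lie in $\ell^2$ by the computation already performed in Section~3.2. Since $\varphi_2(1/2,\lambda)$ and $g_2(\lambda)$ are of sine type, the series \eqref{s15}, \eqref{s17} converge and reconstruct $\psi_1$, $\psi_2 \in \mathcal L^{1/2}$; substituting into \eqref{37}, \eqref{38} yields $\Delta_1(\lambda) = \varphi_1(1/2,\lambda)$ and $\Delta_2(\lambda) = \varphi_1'(1/2,\lambda)$. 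From $\Delta_2$ I read off the zeros $\{\mu_n^{(1)}\}$ and form the norming constants $\alpha_n$ by \eqref{s18}. The hypotheses $\alpha_n > 0$, simplicity of the $(\mu_n^{(1)})^2$, and the closeness condition \eqref{334} to a model problem $\{\tilde q_1,\tilde a_2,\tilde h_1\}$ are precisely the input required by the inverse spectral theorem for Sturm--Liouville operators with one interior discontinuity (the method of spectral mappings, \cite{FY,YU}): there exists a real $q_1 \in L^2(0,1/2)$, a real $h_1$, and a real $a_2$ such that the problem \eqref{328}--\eqref{331}, \eqref{333} with these coefficients and the prescribed $a_1$, $d$ has spectral data $\{(\mu_n^{(1)})^2,\alpha_n\}$.

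It then remains to check consistency: the constructed $q_1, h_1, a_2$ must reproduce the functions $\varphi_1(1/2,\lambda)$, $\varphi_1'(1/2,\lambda)$ we started from. This follows because the spectral data $\{(\mu_n^{(1)})^2,\alpha_n\}$ together with the uniqueness part of the inverse theorem pin down the pair $(\Delta_1,\Delta_2)$ via the standard representation of the characteristic functions as infinite products with the norming constants governing the Robin/Dirichlet companions; since the reconstructed pair and the interpolated pair share zeros and leading asymptotics, they coincide. Feeding $\varphi_1(1/2,\lambda)$, $\varphi_1'(1/2,\lambda)$, $\varphi_2(1/2,\lambda)$, $\varphi_2'(1/2,\lambda)$ into \eqref{311} returns exactly $\Phi(\lambda)$, whose zeros are $\{\pm\lambda_n\}_{n\ge 0}$; hence $\{\lambda_n^2\}_{n\ge 0}$ is the spectrum of \eqref{31}--\eqref{36} generated by $(q_1,q_2)$ and $(a_1,a_2,d,h_1,h_2)$.

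The main obstacle I expect is the consistency check in the last paragraph together with verifying that the reconstructed $a_2$ really equals the value forced by $\omega_1$: the inverse theorem for the discontinuous problem delivers \emph{some} triple $(q_1,h_1,a_2)$ with the right $\{\mu_n^{(1)}\}$ and $\{\alpha_n\}$, but one must argue that this triple is the one compatible with the originally interpolated $\varphi_1(1/2,\lambda)$ and hence with $\Phi$. The cleanest route is to invoke the uniqueness statement of Theorem~B: the data $\{\{\lambda_n^2\},h_2,q_2\}$ determine all of $q_1,a_1,a_2,d,h_1$ uniquely, so any solution produced by the algorithm must be \emph{the} solution, which closes the loop. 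One also has to confirm that the sign and positivity constraints ($a_1 > 0$, i.e. $-1 < \theta < 1$; $\alpha_n > 0$; $|a_1 - 1| + |a_2| > 0$, guaranteed by $|\theta| + |\kappa_2| > 0$) propagate correctly through the construction, but these are exactly the conditions imposed in hypothesis~(2) and in the displayed assumptions on $\{(\mu_n^{(1)})^2,\alpha_n\}$.
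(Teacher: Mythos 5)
Your proposal follows essentially the same route as the paper: recover $a_1$ and $d$ from the asymptotics of $\Psi$, extract $\omega_1,\omega_2$ and hence $f_1(1/2), f_2(1/2)$ via \eqref{s14}, interpolate to obtain $\varphi_1(1/2,\lambda)$ and $\varphi_1'(1/2,\lambda)$, form the spectral data $\{(\mu_n^{(1)})^2,\alpha_n\}$, and invoke the inverse spectral theorem for the discontinuous problem (method of spectral mappings) to produce $q_1$, $h_1$, $a_2$. Two small corrections. First, your side claim that $a_2$ can already be read off from $\omega_1$ and $\omega_2$ ``as in \eqref{om1}--\eqref{defa2}'' does not work here: unlike the $d=1/2$ case, the pair of equations for $\omega_1,\omega_2$ contains the three independent unknowns $h_1+[q_1]$, $\int_0^d q_1\,dx$ and $a_2$, so the system is underdetermined; the paper obtains $a_2$ only at the final step, from the explicit spectral-mappings formulas \eqref{s19}. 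This misstep is harmless because the interpolation only needs $f_1(1/2)$ and $f_2(1/2)$, which are correctly determined. Second, the consistency issue you flag as the main obstacle is closed in the paper not by appealing to the uniqueness Theorem~B (which presupposes that the constructed object already has the prescribed spectrum, so it cannot by itself establish that fact) but by the direct observation that the spectral data $\{(\mu_n^{(1)})^2,\alpha_n\}$ uniquely determine the pair of characteristic functions $(\Delta_1,\Delta_2)$, so the reconstructed problem's characteristic functions coincide with the interpolated $\varphi_1(1/2,\lambda)$, $\varphi_1'(1/2,\lambda)$, and substituting into \eqref{311} returns $\Phi$ with zeros $\{\pm\lambda_n\}$ --- which is the first of the two arguments you yourself sketch.
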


\begin{proof}

Let data $S = \{ h_2, q_2(x), \{ \lambda_n^2 \} \}_{n \ge 0}$ satisfy the conditions of the theorem. Using these data, we construct the parameters $a_1$ and $d$,
as it was described in~Section~3.1. Then we also construct the functions $\varphi_1(1/2, \lambda)$, $\varphi_1'(1/2, \lambda)$ and, finally,
the data $\{ (\mu_n^{(1)})^2, \alpha_n \}_{n \ge 0}$, as it was described in Section~3.2.

Recall that, for the reals $\{(\mu_n^{(1)})^2, \alpha_n\}_{n\geqslant0}$ to be the spectral data
for a certain boundary value problem \eqref{328}-\eqref{331} and \eqref{333}, the following conditions are necessary and sufficient:
$\alpha_n>0$, $(\mu_n^{(1)})^2\neq(\mu_m^{(1)})^2$ $(n\neq m)$, and there exist data $\{\tilde{q}_1(x), \tilde{a}_2, \tilde{h}_1\}$,
such that \eqref{334} holds (see \cite{FY}). By the assumption of the theorem, those conditions are valid.
Hence we can apply the method of spectral mappings,
described in Section~4.4 (Thereom~4.4.5 and Lemma~4.4.2 in \cite{FY}) to obtain $a_2$, $h_1$ and $q_1$:
\begin{eqnarray}\label{s19}
&& q_1(x)=\tilde{q}_1(x)+\varepsilon(x),\nonumber~~~~~~~~~~~~~~~~~~~~~~~~~~~~~~~~~~~~~~~~~~~~~~~~~~~~~~~~~~~~~~~~~~~~~~~\\
&& a_2=\tilde{a}_2+(a_1^{-1}-a_1^3)\varepsilon_0(d-0),\\
&& h_1=\tilde{h}_1-\varepsilon_0(0).\nonumber
\end{eqnarray}
Here we use the triple $\{\tilde{q}_1$, $\tilde{a}_2$, $\tilde{h}_1\}$, described in the theorem statement and satisfying~\eqref{314}, together with $\{a_1,d\}$, constructed via
(\ref{314}) and (\ref{314d}). The functions $\varepsilon(x)$, $\varepsilon_0(x)$ are defined by (4.4.69) in \cite{FY}.

Thus, the data $\{(\mu_n^{(1)})^2, \alpha_n\}_{n\geqslant0}$ are the spectral data of the problem \eqref{328}-\eqref{331}, \eqref{333} with
the values $\{ q_1, h_1, d, a_1, a_2 \}$, constructed above. Consequently, the constructed functions $\varphi_1(1/2, \lambda)$ and $\varphi_1'(1/2, \lambda)$
are the characteristic functions of the eigenvalue problems \eqref{328}-\eqref{332} and \eqref{328}-\eqref{331}, \eqref{333}, respectively.
Hence the function $\Phi(\lambda)$, satisfying~\eqref{311}, is the characteristic function of the problem \eqref{31}-\eqref{36}.
By construction, $\Phi(\lambda)$ has the zeros $\{ \pm \lambda_n \}_{n \ge 0}$, so we have proved that $\{ \lambda_n^2 \}_{n \ge 0}$
are the zeros of the constructed problem~\eqref{31}-\eqref{36}.

\end{proof}

Finally, we arrive at the following algorithm for solving the inverse problem under consideration.

\medskip

\emph{Algorithm 2.} Let the data $S:=\left\{h_2, q_2(x), \{\lambda_n^2\}_{n\ge 0}\right\}$,
satisfying the conditions of Theorem \ref{2}, be given. We have to construct $a_1, a_2, d, h_1$ and $q_1(x)$.\vspace{0.2cm}

\textbf{Step 1.} Construct $a_1$ and $d$, using $\{\lambda_n^2\}_{n\ge 0}$, via (\ref{314}) and (\ref{314d}).\vspace{0.2cm}

\textbf{Step 2.} Construct $\Phi(\lambda)$ from $\{\lambda_n^2\}_{n\ge 0}$ with $C=b_1C_0$ and $b_1=\frac{a_1+a_1^{-1}}{2}$, via (\ref{313}) and (\ref{001}).\vspace{0.2cm}

\textbf{Step 3.} Find $\omega_1$ and $\omega_2$ appearing in (\ref{312}), via (\ref{315}) and (\ref{316}).\vspace{0.2cm}

\textbf{Step 4.} Find $\varphi_2(1/2,\lambda)$ with its zeros $\{\nu_n^{(1)}\}_{n\in \mathbb{Z}^0}$ and
$\varphi_2'(1/2,\lambda)$ with its zeros $\{\mu_n^{(1)}\}_{n\in \mathbb{Z}^0}$ from the given $h_2$ and $q_2(x)$,
via (\ref{39}) and (\ref{310}).\vspace{0.2cm}

\textbf{Step 5.} Find $f_1(1/2)$ and $f_2(1/2)$ appearing in (\ref{37}) and (\ref{38}), via (\ref{s14}).\vspace{0.2cm}

\textbf{Step 6.} Construct $\psi_1(\lambda)$ via (\ref{320}) and (\ref{s15}).\vspace{0.2cm}

\textbf{Step 7.} Find $\varphi_1(1/2,\lambda)$ by substituting $\psi_1(\lambda)$ into (\ref{37}).\vspace{0.2cm}

\textbf{Step 8.} Construct $\psi_2(\lambda)$ via (\ref{s16}), (\ref{yd}) and (\ref{s17}).\vspace{0.2cm}

\textbf{Step 9.} Find $\varphi_1'(1/2,\lambda)$ by substituting $\psi_2(\lambda)$ into (\ref{38}).\vspace{0.2cm}

\textbf{Step 10.} Find the norming constants $\{\alpha_n\}_{n\geq 0}$ via (\ref{s18}).\vspace{0.2cm}

\textbf{Step 11.} Together with $\{a_1,d\}$, constructed by (\ref{314}) and (\ref{314d}), take $\{\tilde{a}_2, \tilde{h}_1, \tilde{q}_1\}$, satisfying (\ref{334}).\vspace{0.2cm}

\textbf{Step 12.} Find $\varepsilon(x)$ and $\varepsilon_0(x)$ by (4.4.69) in \cite{FY}.\vspace{0.2cm}

\textbf{Step 13.} Finally, construct $q_1(x)$, $a_2$ and $h_1$ via (\ref{s19}).

\bigskip

\noindent {\bf Acknowledgments.} The authors would like to thank the referees for valuable comments.
The research work was supported in part by the National Natural Science Foundation of China
(11871031 and 11611530682) and
the author Bondarenko
was supported by Grant 1.1660.2017/4.6 of the Russian Ministry of Education
and Science, and by Grant 19-01-00102 of the Russian
Foundation for Basic Research.


\begin{thebibliography}{99}

\bibitem{BO1} Bondarenko N. P. 2018 A partial inverse problem for the Sturm-Liouville operator on a star-shaped graph.
Analysis and Mathematical Physics 8 155-168.

\bibitem{BO2} Bondarenko N.P. 2018 Partial inverse problems for the Sturm-Liouville operator on a star-shaped graph with mixed
boundary conditions. Journal of Inverse and Ill-Posed Problems 26 1-12.

\bibitem{BO} Borg G. 1946 Eine Umkehrung der Sturm-Liouvilleschen Eigenwertaufgabe. Acta Math. 78 1-96.

\bibitem{B1} Buterin S. A. 2009 On a constructive solution of the incomplete inverse Sturm-Liouville problem.
In: Matematika. Mekhanika, Saratov State Univ., Saratov, 11 8-12 (Russian).

\bibitem{B2} Buterin S. A. 2011 On half inverse problem for differential pencils with the spectral parameter in boundary conditions. Tamkang J. Math. 42 355-364.
\bibitem{FY} Freiling G. and Yurko V. A. 2001 Inverse Sturm-Liouville Problems and Their Applications New York:
NOVA Science Publishers.
\bibitem{GS} Gesztesy F. and Simon B. 2000 Inverse spectral analysis with partial information on the potential: II. The case
of discrete spectrum. Trans. Am. Math. Soc. 352 2765-2789.

\bibitem{HA1} Hald O. 1980 Inverse eigenvalue problem for the mantle. Geophys. J. R. Astr. Soc. 62
41-48.

\bibitem{HA} Hald O. 1984 Discontinuous inverse eigenvalue problem. Commun. Pure Appl. Math. 37 539-577.

\bibitem{HL} Hochstadt H. and Lieberman B. 1978 An inverse Sturm-Liouville problem with mixed given data. SIAM J. Appl.
Math. 34 676-680.

\bibitem{HM} Hryniv O.R. and Mykytyuk Y.V. 2004 Half-inverse spectral problems for Sturm-Liouville
operators with singular potentials. Inverse Problems 20 1423-1444.

\bibitem{LE} Levitan B.M. 1984 Inverse Sturm-Liouville Problems, Nauka, Moscow (Russian); English
transl. 1987 VNU Sci. Press, Utrecht.


\bibitem{LL} Levin B. Ju and Lyubarskii Yu I. 1975 Interpolation by entire functions of special classes and related expansions
in series of exponents. Izv. Akad. Nauk USSR 39 657-702 (Russian).

\bibitem{MA} Marchenko V.A. 1986 Sturm-Liouville Operators and Applications, vol 22 Basle: Birkhauser.

\bibitem{MP} Martinyuk O. and Pivovarchik V. 2010 On the Hochstadt-Lieberman theorem. Inverse Problems 26 035011 (6pp).

\bibitem{PI} Pivovarchik V. 2012 On the Hald-Gesztesy-Simon theorem. Integral Equations
and Operator Theory 73 383-393.

\bibitem{PT} P\"{o}schel J. and Trubowitz E. 1987 Inverse Spectral Theory, New York, Academic Press.

\bibitem{SA} Sakhnovich L. 2001 Half-inverse problem on the finite interval. Inverse Problems 17 527-532.

\bibitem{SBI} Shieh C.-T., Buterin S.A. and Ignatiev M. 2011 On Hochstadt-Lieberman theorem for Sturm-Liouville operators.
Far East Journal of Applied Mathematics 52 131-146.

\bibitem{YA} Yang C. F. 2014 Traces of Sturm-Liouville operators with discontinuities. Inverse Problems in Science and
Engineering 22 803-813.

\bibitem{YU} Yurko V. A. 2000 Integral transforms connected with discontinuous boundary value problems. Integral
Transforms Spec. Funct. 10 141-164.
\end{thebibliography}
\end{document}